\newtheorem{theorem}{Theorem}
\newtheorem{prop}[theorem]{Proposition}
\newtheorem{lem}[theorem]{Lemma}
\newtheorem{cor}[theorem]{Corollary}
\theoremstyle{definition}
\newtheorem{rem}[theorem]{Remark}
\newtheorem{mydef}[theorem]{Definition}
\newtheorem{example}[theorem]{Example}
\renewcommand{\epsilon}{\varepsilon}
\def\<{\langle}
\def\>{\rangle}
\newcommand{\BigWedge}{\mathord{\adjustbox{valign=B,totalheight=.7\baselineskip}{$\bigwedge$}}}
\begin{document}
\title[Independence Polynomials of 2-step Nilpotent Lie Algebras]{Independence Polynomials of 2-step Nilpotent Lie Algebras}
\author[M.\ Aldi, T.\ Gabrielsen, D.\ Grandini, J.\ Harris, K.\ Kelley]{Marco Aldi, Thor Gabrielsen, Daniele Grandini, Joy Harris, Kyle Kelley}

\begin{abstract}
Motivated by the Dani-Mainkar construction, we extend the notion of independence polynomial of graphs to arbitrary 2-step nilpotent Lie algebras. After establishing efficiently computable upper and lower bounds for the independence number, we discuss a metric-dependent generalization motivated by a quantum mechanical interpretation of our construction. As an application, we derive elementary bounds for the dimension of abelian subalgebras of 2-step nilpotent Lie algebras. 
\end{abstract}
\maketitle

\section{Introduction} 

The Dani-Mainkar construction \cite{DaniMainkar05} assigns to each finite simple graph $G$ a finite-dimensional 2-step nilpotent Lie algebra $\mathcal L(G)$. Compared to general 2-step nilpotent Lie algebras, Dani-Mainkar Lie algebras are special due to their combinatorial nature. For instance, the canonical basis labeled by vertices and edges of the corresponding graphs can be exploited to derive an explicit graph-theoretic description of the cohomology of Dani-Mainkar Lie algebras\cites{PouseeleTirao09,ABGGLP}.   

While this paper can be viewed as the natural continuation of the study of the Dani-Mainkar construction started in \cites{AldiBevins24, ABGGLP}, here we shift our focus from Dani-Mainkar Lie algebras to arbitrary finite-dimensional 2-step nilpotent Lie algebras. Our guiding principle is that Mainkar's theorem \cite{Mainkar15} justifies viewing 2-step nilpotent Lie algebras as generalizations of graphs. Specifically, we propose to regard 2-step nilpotent Lie algebras as ``quantum'' generalizations of graphs in which possibly non-trivial linear superpositions of classical edges are allowed. Guided by this philosophy, we aim to prove (or disprove)  generalizations of graph-theoretic results to arbitrary finite-dimensional 2-step nilpotent Lie algebras along the following lines. We start by recasting some graph-theoretic notions in purely Lie-theoretic language in a way that is compatible with the Dani-Mainkar construction. For instance, \cite{PouseeleTirao09} strongly suggests that the first Betti number and the dimension of the commutator ideal are the correct generalizations of the number of vertices and, respectively, of the number of edges. Once some entries of this dictionary between graph theory and Lie theory are established, it is then possible to take graph-theoretic results formulated using only notions admitting a Lie-theoretic counterpart and ask whether they are applicable to arbitrary 2-step nilpotent Lie algebras. 

As proof-of-concept, in this paper we show how this programme might be carried out in the context of the theory of independent sets of graphs. Independent sets, sets of vertices no two of which are joined by an edge, have been extensively studied in graph theory. Our first observation is that the independent sets of a simple graph $G$ provide a canonical basis for the sector of the cohomology of $\mathcal L(G)$ that is pulled-back via the canonical projection map onto its abelianization. Since this sector of the cohomology, which for geometric reasons we refer to as the basic cohomology, makes sense for arbitrary 2-step nilpotent Lie algebras, we take it as our starting point for the proposed generalization of the independence theory of graphs beyond the Dani-Mainkar setting. In particular, using the dimension of the graded pieces of the basic cohomology as coefficients, we attach a single-variable polynomial to every 2-step nilpotent Lie algebra in such a way as to recover the independence polynomial of a graph in the Dani-Mainkar case. Accordingly, we refer to this polynomial as the independence polynomial of a 2-step nilpotent Lie algebra, and explicitly calculate it in the (non-Dani-Mainkar) case of Heisenberg Lie algebras. 

The problem of deciding if an independent set of given size exists (or, dually, if a clique of given size exists) is known to be NP-complete \cite{Karp72}. To narrow down the search space, it is helpful to have efficiently computable upper and lower bounds, for the independence number i.e.\ the degree of the independence polynomial. In this paper we focus on an upper bound due to Hansen \cite{Hansen79} and on a lower bound which is implied by a theorem of Turan \cite{Turan41}. Both of these bounds are algebraic functions of the number of edges and the number of vertices. In particular, both bounds on the independence number of a graph are efficiently computable and it makes sense to ask whether they hold beyond the Dani-Mainkar case. We show that, suitably restated in terms of the first Betti number and the dimension of the Lie algebra, the Hansen upper bound is indeed valid for arbitrary 2-step nilpotent Lie algebras. As an application, we derive an efficiently computable upper bound for the dimension of abelian subalgebras of 2-step nilpotent Lie algebras. 

On the other hand, as the example of Heisenberg Lie algebras shows, the lower bound on the independence number of a graph coming from Turan's theorem fails for general 2-step nilpotent Lie algebras. Instead, we are able to give a different lower bound, also an algebraic function of the first Betti number and the dimension of the Lie algebra, for the independence number of arbitrary finite-dimensional 2-step nilpotent Lie algebras. As shown in a companion paper \cite{AGGHK}, in the case of graphs this new lower bound is always dominated by the Turan lower bound. However, its natural (in light of \cite{AldiBevins24}) extension to $L_\infty$-algebras restricts to a novel lower bound on the independence number of hypergraphs. The lower bound found in \cite{AGGHK} is a combinatorial result first obtained by Lie-theoretical reasoning. Together with the Lie-theoretic results obtained by graph-theoretic reasoning in this paper, this is a good illustration of the potential impact that this line of inquiry can have in these two seemingly unrelated areas of mathematics.  

We conclude our paper with a quantum mechanical interpretation of the basic cohomology of a 2-step nilpotent Lie algebra. To this end, we introduce an inner product on the vector space underlying the Lie algebra and use it to define a Laplacian operator acting on corresponding Cartan-Chevalley-Eilenberg complex. Equivalently, we realize (non-canonically, due to the choice of the inner product) the Cartan-Chevalley-Eilenberg complex as the Hilbert space of a supersymmetric quantum mechanical system with purely fermionic degrees of freedom. In this picture, we are able to identify the basic cohomology as the ground states of a specific sector of the Hilbert space (in the Dani-Mainkar case, these are the fermionic states labeled by vertices of the underlying graph). 

The information contained in the spectrum of this sector is naturally encoded by the so-called basic partition function attached to the given 2-step nilpotent Lie algebra and the chosen inner product. Our first observation is that the independence polynomial is a univariate specialization of this bivariate basic partition function. Furthermore, in the Dani-Mainkar case the basic partition function is a specialization of the four-variable generalized subgraph counting polynomial introduced in \cite{Trinks12}. We also calculate the basic partition function for arbitrary Heisenberg Lie algebras with respect to the inner product that makes the standard basis orthonormal. Intriguingly, the resulting explicit formula uses in an essential way the spectrum of Johnson graphs. 

We believe that the results presented in this paper provide sufficient evidence to justify further use of graph theory as a guiding metaphor in the study of 2-step nilpotent Lie algebras and their related supersymmetric quantum mechanical systems. We leave further progress in the programme sketched here to future work.

\section{Preliminaries}

\subsection{Independence Polynomials of Graphs}

In this section we collect known facts about the theory of independent sets of graphs and the Dani-Mainkar construction relating graphs to 2-step nilpotent Lie algebras.

\begin{mydef}
Let $G$ be a finite simple graph with vertices $V(G)$. An {\it independent set} of $G$ is a subset $S\subseteq V(G)$ whose induced subgraph $G[S]$ contains no edges i.e.\ it is isomorphic to $|S|K_1$. We denote by $s_k(G)$ the number of independent sets of size $k$ of $G$. The {\it independence number of $G$} is the maximum, denoted $\alpha(G)$, of the set of all integers $k$ such that $s_k\neq 0$. The {\it independence polynomial of $G$} is the polynomial
\begin{equation}
I(G,t)=\sum_{k=0}^{\alpha(G)} s_k(G) t^k\,.
\end{equation}
\end{mydef}

\begin{example}[\cite{Arocha84}]
If $P_n$ is the path graph with $n$ vertices, then
\begin{equation}
I(P_n,t)=\sum_{k=0}^{\lfloor(n+1)/2\rfloor} \binom{n+1-k}{k} t^k\,.
\end{equation}
\end{example}

\begin{theorem}[\cites{Berge76,Hansen79}]
Let $G$ be a finite simple graph with vertices $V(G)$ and edges $E(G)$. Then
\begin{equation}\label{eq:3}
\frac{|V(G)|^2}{2|E(G)|+|V(G)|}\le \alpha(G)\le \frac{1}{2}+\sqrt{ \frac{1}{4}+|V(G)|^2-|V(G)|-2|E(G)|}\,.
\end{equation}
\end{theorem}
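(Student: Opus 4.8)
The plan is to prove the two inequalities separately, writing $n=|V(G)|$, $m=|E(G)|$ and $\alpha=\alpha(G)$ for brevity.

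For the upper bound I would start from a maximum independent set $S$ with $|S|=\alpha$. By definition no two vertices of $S$ are adjacent, so all $\binom{\alpha}{2}$ pairs inside $S$ are non-edges of $G$. Hence the edge count is bounded by the total number of vertex pairs minus these forced non-edges, namely $m\le \binom{n}{2}-\binom{\alpha}{2}$. Clearing denominators turns this into the quadratic inequality $\alpha^2-\alpha+2m-n^2+n\le 0$ in the variable $\alpha$; since the inequality holds precisely between the two roots of the associated quadratic, keeping the larger root yields exactly the right-hand side of \eqref{eq:3}. This direction is entirely elementary — edge counting followed by the quadratic formula — and I expect no real obstacle here.

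For the lower bound the natural route is the Caro–Wei bound, a quantitative refinement of Turán's theorem. I would first show $\alpha\ge \sum_{v\in V(G)} 1/(d(v)+1)$, where $d(v)$ is the degree of $v$, by the probabilistic argument: pick a uniformly random linear order on $V(G)$ and let $S$ be the set of vertices preceding all of their neighbors; then $S$ is independent, each $v$ lies in $S$ with probability $1/(d(v)+1)$, so $\mathbb{E}[|S|]=\sum_v 1/(d(v)+1)$ and a maximum independent set is at least this large. To pass from this vertex-degree sum to the stated bound I would apply Cauchy–Schwarz in the form $n^2=\bigl(\sum_v 1\bigr)^2\le \bigl(\sum_v \tfrac{1}{d(v)+1}\bigr)\bigl(\sum_v (d(v)+1)\bigr)$; since $\sum_v(d(v)+1)=2m+n$, this gives $\sum_v 1/(d(v)+1)\ge n^2/(2m+n)$, and combining with the Caro–Wei inequality produces the left-hand side of \eqref{eq:3}. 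Alternatively, one could simply invoke Turán's theorem \cite{Turan41} applied to the complement, which directly yields $\alpha\ge n/(\bar d+1)$ with $\bar d=2m/n$ the average degree.

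The main obstacle is concentrated in the lower bound: the upper bound is pure counting, whereas the lower bound rests on the nontrivial Caro–Wei/Turán inequality. Within that, the delicate point is establishing the degree-sum estimate $\alpha\ge\sum_v 1/(d(v)+1)$; once it is in hand, the convexity step via Cauchy–Schwarz is routine, and the equality case of that step (regular graphs, where $d(v)=2m/n$ for every $v$) confirms that the bound is sharp.
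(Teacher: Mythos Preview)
Your argument is correct on both halves. Note, however, that the paper does not supply its own proof of this theorem: it is quoted in the preliminaries with citations to \cite{Berge76} and \cite{Hansen79} and left unproved, so there is no in-paper argument to compare against directly.

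That said, your upper-bound argument is exactly the mechanism the paper later lifts to the Lie-algebra setting in Theorem~\ref{thm:27}: there the inequality $\binom{\alpha(\mathfrak g)}{2}\le b_B^2(\mathfrak g)\le \binom{b}{2}-\dim[\mathfrak g,\mathfrak g]$ plays the role of your edge count $\binom{\alpha}{2}\le \binom{n}{2}-m$, and the same quadratic inversion finishes. So your choice of proof for the Hansen bound meshes well with the paper's own generalization. For the lower bound, the paper explicitly observes (in the remark following \eqref{eq:12}) that the Tur\'an/Caro--Wei route does \emph{not} survive to general 2-step nilpotent Lie algebras, which is why Theorem~\ref{thm:24} uses a different dimension-counting surjectivity argument instead; your Caro--Wei plus Cauchy--Schwarz proof is the standard graph-theoretic one and is fine for the statement as quoted, but it is worth being aware that this is precisely the half that fails to generalize.
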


\begin{mydef}[\cite{Trinks12}]\label{def:4}
The {\it generalized subgraph counting polynomial} of a finite simple graph $G$ is
\begin{equation}\label{eq:4}
F(G,q,r,s,t)=\sum_{H\subseteq G} q^{k(H)}r^{|E(H)|}s^{|E(G[V(H)])|}t^{|V(H)|}\,,
\end{equation}
where the sum is over all (not necessarily induced) subgraphs $H$ of $G$, $k(H)$ is the number of connected components of $H$ and $G[V(H)]$ and is the subgraph of $G$ induced by the vertices of $H$.
\end{mydef}

\begin{rem}
$F(G,1,1,0,t)=I(G,t)$ for every finite simple graph $G$.
\end{rem}

\subsection{2-step nilpotent Lie algebras}

\begin{mydef}
A (real) {\it 2-step nilpotent Lie algebra} is a vector space $\mathfrak g$ over $\mathbb R$ together with an antisymmetric bilinear operation, known as the {\it Lie bracket}, $[\,,\,]:\mathfrak g\times \mathfrak g\to \mathfrak g$ such that $[x,[y,z]]=0$ for every $x,y,z\in \mathfrak g$.
\end{mydef}

\begin{example}\label{ex:5}
The {\it $(2n+1)$-dimensional Heisenberg Lie algebra} is the 2-step nilpotent Lie algebra $\mathfrak h_{n}$ with basis $y_1,\ldots,y_{2n},z$ and Lie bracket such that $[y_{2k-1},y_{2k}]=z$ whenever $k\in\{1,\ldots,n\}$. 
\end{example}

\begin{mydef}
Let $\mathfrak g$ be a 2-step nilpotent Lie algebra. The {\it Cartan-Chevalley-Eilenberg complex of $\mathfrak g$} is the exterior algebra $\mathcal C^\bullet(\mathfrak g)$ of the dual $\mathfrak g^\vee$ of $\mathfrak g$ together with the odd derivation $d$ such that $d(\varphi)(x,y)=\varphi([x,y])$ for every $\varphi\in \mathfrak g^\vee$ and $x,y\in \mathfrak g$. The {\it cohomology of $\mathfrak g$} is the cohomology $H^\bullet(\mathfrak g)$ of the cochain complex $(\mathcal C^\bullet(\mathfrak g),d)$. The $k$-th {\it Betti number of $\mathfrak g$} is $b^k(\mathfrak g)=\dim H^k(\mathfrak g)$.
\end{mydef}

\begin{example}
The first Betti number $b^1(\mathfrak g)$ is equal to the dimension of the space of elements $\varphi\in \mathfrak g^\vee$ such that $[\mathfrak g,\mathfrak g]\subseteq \ker(\varphi)$.
\end{example}

\begin{example}[\cite{Santharoubane83}] Let $\mathfrak h_n$ be as in Example \ref{ex:5}. Then, for every $k\in \{0,\ldots, n\}$,
\begin{equation}
b^{2n+1-k}(\mathfrak g)=b^k(\mathfrak g)=\binom{2n}{k}-\binom{2n}{k-2}\,.
\end{equation}
\end{example}

\begin{rem}\label{rem:10}
Let $\mathfrak g$ be a 2-step nilpotent Lie algebra with basis $\{y_1,\ldots,y_b,z_1,\ldots,z_c\}$ and brackets $[y_i,y_j]=\sum_{k=1}^c \gamma_{i,j}^k z_k$ for some real structure constants $\gamma_{i,j}^k$. If the dual basis is $\{y_1^*,\ldots,y_b^*,z_1^*,\ldots,z_c^*\}$, then the Cartan-Chevalley-Eilenberg complex $\mathcal C^\bullet(\mathfrak g)$ can be concretely realized as the algebra of anticommutative polynomials in the variables $y_1^*,\ldots,y_b^*,z_1^*,\ldots,z_c^*$. With respect to these variables, the differential of $\mathcal C^\bullet(\mathfrak g)$ can be written explicitly as a first-order differential operator
\begin{equation}\label{eq:6}
d=\sum_{k=1}^c \sum_{1\le i<j\le b} \gamma_{i,j}^k y_i^*y_j^* \frac{\partial}{\partial z_k^*}\,.
\end{equation}
\end{rem}

\subsection{The Dani-Mainkar Construction}

\begin{mydef}[\cite{DaniMainkar05}]
Let $G$ be a finite simple graph with vertices $V(G)=\{1,\ldots,n\}$ and edges $E(G)$. Let $V$ be the (real) vector space with basis $\{x_1,\ldots,x_n\}$ and let $W$ be the subspace of $\BigWedge^2 V$ generated by monomials $x_i\wedge x_j$ whenever $\{i,j\}$ is not in $E(G)$. The {\it Dani-Mainkar Lie algebra of $G$} is the 2-step nilpotent Lie algebra $\mathcal L(G)=V\oplus \left(\BigWedge^2 V \right)/W$ with Lie bracket such that $[x,y]=x\wedge y \mod W$ for all $x,y\in V$. 
\end{mydef}

\begin{example}
If $K_2$ is the complete graph on $2$ vertices, then $\mathcal L(K_2)$ is isomorphic to the $3$-dimensional Heisenberg Lie algebra $\mathfrak h_1$.
\end{example}

\begin{theorem}[\cite{Mainkar15}]\label{thm:10}
Let $G_1$ and $G_2$ be finite simple graphs. Then $G_1$ and $G_2$ are isomorphic if and only if $\mathcal L(G_1)$ and $\mathcal L(G_2)$ are isomorphic as Lie algebras. 
\end{theorem}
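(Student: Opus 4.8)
The plan is to reduce the statement to a question in multilinear algebra and then solve it by reconstructing $G$ from invariants of the Lie algebra. The forward implication is immediate: a graph isomorphism $G_1\to G_2$ permutes vertices, hence induces a linear isomorphism $V_1\to V_2$ carrying $W_1$ to $W_2$ (non-edges to non-edges), which descends to a Lie algebra isomorphism $\mathcal L(G_1)\to\mathcal L(G_2)$. For the converse, let $\phi\colon\mathcal L(G_1)\to\mathcal L(G_2)$ be a Lie algebra isomorphism. Since any isomorphism carries the commutator ideal to the commutator ideal and $[\mathcal L(G_i),\mathcal L(G_i)]=(\BigWedge^2 V_i)/W_i$, the map $\phi$ restricts to a linear isomorphism of commutator ideals and descends to a linear isomorphism $\bar\phi\colon V_1\to V_2$ of abelianizations. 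Writing $\beta_i\colon\BigWedge^2 V_i\to(\BigWedge^2 V_i)/W_i$ for the bracket, compatibility of $\phi$ with brackets reads $\phi\circ\beta_1=\beta_2\circ\BigWedge^2\bar\phi$; dualizing and identifying the dual of the commutator ideal with $W_i^\perp\subseteq\BigWedge^2 V_i^*$, this says that $\BigWedge^2\bar\phi^*$ carries $S_2:=W_2^\perp$ onto $S_1:=W_1^\perp$. The theorem thus reduces to the claim that the $GL(V)$-orbit of the subspace $S=W^\perp$ determines $G$ up to isomorphism.

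The structural input is that $S$ is spanned by the decomposable two-forms $x_i^*\wedge x_j^*$ indexed by the edges $\{i,j\}\in E(G)$. A few invariants are read off at once: $\dim V=b^1(\mathcal L(G))$ recovers $|V(G)|$, while $\dim S=\dim[\mathcal L(G),\mathcal L(G)]$ recovers $|E(G)|$. Isolated vertices can be split off first: their span maps isomorphically onto $\mathfrak z(\mathcal L(G))/[\mathcal L(G),\mathcal L(G)]$ and contributes an abelian direct summand, and their number $\dim\mathfrak z(\mathcal L(G))-\dim[\mathcal L(G),\mathcal L(G)]$ is an isomorphism invariant, so I may assume $G$ has no isolated vertices. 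It then remains to recover the adjacency relation of $G$, up to a relabeling of vertices, from the $GL(V)$-orbit of $S$.

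The main obstacle is exactly this recovery of adjacency, and the difficulty is that vertex lines are \emph{not} canonical inside $(V^*,S)$. Indeed $\mathcal L(G)$ typically admits Lie algebra automorphisms that do not come from graph automorphisms: whenever $G$ has a set of mutually non-adjacent vertices with identical neighborhoods (for instance the parts of a complete multipartite graph), any invertible linear map mixing those vertices preserves $W$ and hence induces an automorphism, so the naive rule ``edges $=$ rank-two forms in $S$'' overcounts. The route I would take is to extract genuinely $GL(V)$-invariant data from $S$: the projective variety of decomposable elements of $S$ together with its incidence geometry, equivalently the rank stratification of the map $v\mapsto\ad_v$ from $V$ to $\Hom(V,[\mathcal L(G),\mathcal L(G)])$, whose value at a vertex is the degree of that vertex. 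The technical heart is to show that this data reconstructs $G$ up to the permitted mixing of twin vertices, and that such mixing is harmless because it corresponds to genuine graph isomorphisms; establishing this uniformly over all finite simple graphs, especially the degenerate highly-symmetric ones, is where the real work lies.
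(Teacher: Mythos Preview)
The paper does not supply a proof of this theorem; it is quoted from \cite{Mainkar15} without argument, so there is no in-paper proof to compare against.

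Your reduction is correct and is essentially the starting point of Mainkar's own argument: a Lie algebra isomorphism $\phi$ preserves the commutator ideal, descends to a linear isomorphism $\bar\phi\colon V_1\to V_2$, and compatibility with brackets forces $\BigWedge^2\bar\phi$ to carry $W_1$ onto $W_2$ (equivalently $\BigWedge^2\bar\phi^*$ carries $W_2^\perp$ onto $W_1^\perp$). The problem thus becomes the $GL(V)$-equivalence of the edge subspace $S\subseteq\BigWedge^2 V^*$, and you correctly flag the central obstruction: vertices with identical neighborhoods produce genuine non-monomial automorphisms of $\mathcal L(G)$, so vertex lines are not canonical.

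The gap is that your proposal stops exactly where the real content begins. Neither of the invariants you name suffices to reconstruct $G$: the rank function $v\mapsto{\rm rank}(\ad_v)$ recovers only the degree at each coordinate vertex and behaves in a complicated way off the coordinate axes, while the decomposable locus of $S$ can strictly contain the set of edge forms (already for $G=C_4$ one has $(x_1^*+x_3^*)\wedge x_2^*=x_1^*\wedge x_2^*-x_2^*\wedge x_3^*\in S$, a decomposable that is not an edge). One still needs a mechanism that, from the pair $(V,S)$ up to $GL(V)$, produces a decomposition of $V$ into vertex lines compatible with $S$; Mainkar obtains this by an explicit adjustment argument showing that $\bar\phi$ can be replaced by a map taking vertex lines to vertex lines. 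Your outline neither carries out such an argument nor offers a substitute, so what you have is a sound framing and a correct diagnosis of the difficulty, but not yet a proof.
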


\begin{rem}
As shown in \cite{ABGGLP}, the Betti numbers of a Dani-Mainkar Lie algebra $\mathcal L(G)$ can be expressed as a weighted count of isomorphism classes of graphs occurring as induced subgraphs of $G$. In particular, independent sets of size $k$ of $G$ contribute (with weight $1$) to $b^k(G)$. 
\end{rem}

\begin{rem}\label{rem:12}
As observed in \cite{PouseeleTirao09}, $|V(G)|=b^1(\mathcal L(G))$ and
\begin{equation}
    |E(G)|=\dim([\mathcal L(G),\mathcal L(G)])=\dim(\mathcal L(G))-b^1(\mathcal L(G))
\end{equation} for every finite simple graph $G$. This suggests viewing $b^1(\mathfrak g)$ and $\dim(\mathfrak g)-b^1(\mathfrak g)$ as generalizations of, respectively, the notion of the number of vertices and the number of edges for an arbitrary 2-step nilpotent Lie algebra $\mathfrak g$.
\end{rem}

\begin{rem}
The Dani-Mainkar Construction has been extended to hypergraphs in \cite{AldiBevins24} by letting $\mathcal L(G)$ be an (analogously defined) 2-step nilpotent $L_\infty$-algebra for every simple finite hypergraph $G$. Theorem \ref{thm:10} generalizes to this setting. 
\end{rem}

\section{Independence Polynomials of 2-step nilpotent Lie algebras}

In this section we introduce the basic cohomology of an arbitrary 2-step nilpotent Lie algebra and use it to define the independence polynomial. The material of this section extend straightforwardly to 2-step nilpotent $L_\infty$-algebras.

\begin{mydef}
Let $\mathfrak g$ be a 2-step nilpotent Lie algebra and let $\pi:\mathfrak g\to \mathfrak g/[\mathfrak g,\mathfrak g]$ be the canonical quotient map. The {\it basic cohomology of $\mathfrak g$} is the image $H_B^\bullet(\mathfrak g)$ of the induced linear map $\pi^\bullet:H^\bullet(\mathfrak g/[\mathfrak g,\mathfrak g])\to H^\bullet(\mathfrak g)$. We define the {\it $k$-th basic Betti number of $\mathfrak g$} to be $b_B^k(\mathfrak g)=\dim H_B^k(\mathfrak g)$. We use the notation $\mathcal C^\bullet_B(\mathfrak g)$ for the subalgebra $\BigWedge^\bullet H^1(\mathfrak g)$ of $\mathcal C^\bullet(\mathfrak g)$.
\end{mydef}

\begin{rem}
The terminology is justified by the topological interpretation  of the cohomology of 2-step nilpotent Lie algebras as the cohomology of the associated compact nilmanifolds due to Nomizu \cite{Nomizu54}. Since compact nilmanifolds associated with a 2-step nilpotent Lie algebras can be realized as torus fibrations over a torus, the basic cohomology can be thought of as consisting of cohomology classes pulled back from the base of the fibration by means of the natural projection map.
\end{rem}

\begin{rem}\label{rem:15}
Let $\mathfrak g$ be a 2-step nilpotent Lie algebra with basis $\{y_1,\ldots,y_b,z_1,\ldots,z_c\}$ obtained by extending a basis $\{z_1,\ldots,z_c\}$ of the ideal $[\mathfrak g,\mathfrak g]$ so that, in particular, $b=b^1(\mathfrak g)$. Viewing the elements of $\mathcal C^\bullet(\mathfrak g)$ as polynomials in the dual variables in accordance with Remark \ref{rem:10}, we can naturally identify $\mathcal C^\bullet_B(\mathfrak g)$ with polynomials in the $y_i^*$ variables only. Moreover, $H^k_B(\mathfrak g)$ is isomorphic to the quotient of $\mathcal C^\bullet_B(\mathfrak g)$ by the subspace spanned by polynomials of the form $d(z_j^*y_{i_1}^*\cdots y_{i_{k-2}}^*)$.
\end{rem}

\begin{prop}
Let $G$ be a finite simple graph and let $\mathcal L(G)$ be its Dani-Mainkar Lie algebra. Then $b^k_B(\mathcal L(G))=s_k(G)$ for all $k\ge 0$.
\end{prop}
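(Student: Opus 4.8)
The plan is to make the description of basic cohomology in Remark \ref{rem:15} completely explicit in the Dani-Mainkar case and read off the dimensions combinatorially. First I would fix a basis of $\mathcal L(G)$ adapted to Remark \ref{rem:15}: take $y_i = x_i$ for $i \in V(G)$ (so that $b = b^1(\mathcal L(G)) = |V(G)|$ by Remark \ref{rem:12}) and, for each edge $e = \{a,b\} \in E(G)$, let $z_e$ be the image of $x_a \wedge x_b$ in $\left(\BigWedge^2 V\right)/W$; by construction these form a basis of $[\mathcal L(G), \mathcal L(G)]$. The only nonzero structure constants are then $\gamma_{a,b}^{\,e} = 1$ for $e = \{a,b\} \in E(G)$, so the differential of equation \eqref{eq:6} in Remark \ref{rem:10} reduces to $d = \sum_{e = \{a,b\} \in E(G)} x_a^* x_b^* \, \partial/\partial z_e^*$; equivalently, $d(x_i^*) = 0$ for every $i$ and $d(z_e^*) = x_a^* x_b^*$ for $e = \{a,b\}$.

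By Remark \ref{rem:15}, $\mathcal C^k_B(\mathcal L(G))$ has basis the squarefree monomials $x_S^* := \prod_{i \in S} x_i^*$ indexed by subsets $S \subseteq V(G)$ with $|S| = k$, and $H^k_B(\mathcal L(G))$ is the quotient of this space by the span of the elements $d(z_e^* x_{j_1}^* \cdots x_{j_{k-2}}^*)$. Since $d$ is an odd derivation annihilating each $x_j^*$, I would compute $d(z_e^* x_{j_1}^* \cdots x_{j_{k-2}}^*) = x_a^* x_b^* x_{j_1}^* \cdots x_{j_{k-2}}^*$ for $e = \{a,b\}$; this equals $\pm x_T^*$ with $T = \{a,b\} \cup \{j_1, \ldots, j_{k-2}\}$ whenever $\{a,b\}$ is disjoint from $\{j_1,\ldots,j_{k-2}\}$, and vanishes otherwise because of the repeated variable.

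Consequently the subspace we quotient by is spanned by exactly those basis monomials $x_T^*$ for which the $k$-set $T$ contains at least one edge of $G$, and every such monomial does arise in this way (choose an edge inside $T$ and let the remaining vertices index the $x_j^*$). Since distinct squarefree monomials are linearly independent, this subspace is precisely the coordinate subspace of $\mathcal C^k_B(\mathcal L(G))$ indexed by edge-containing $k$-subsets, so the quotient has as a basis the classes of $x_S^*$ with $|S|=k$ and $G[S]$ edgeless, i.e.\ with $S$ an independent set of size $k$. Counting these basis elements gives $b^k_B(\mathcal L(G)) = s_k(G)$, and the boundary cases $k = 0, 1$ (where there is nothing to quotient by and $\mathcal C^k_B(\mathcal L(G))$ survives in full) are consistent since $s_0(G) = 1$ and $s_1(G) = |V(G)|$.

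The only point requiring care, and hence the main (though modest) obstacle, is the bookkeeping in the previous paragraph: one must verify that passing to the span introduces no unexpected cancellation that would shrink it below the full coordinate subspace indexed by edge-containing subsets. This is immediate once one observes that each generator $d(z_e^* x_{j_1}^*\cdots x_{j_{k-2}}^*)$ is a single basis monomial up to sign, so two generators producing the same $T$ agree up to sign and the span is exactly as claimed; there is no collapse and the identification of the quotient basis with independent sets is exact.
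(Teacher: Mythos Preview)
Your proof is correct and follows essentially the same route as the paper's own argument: fix the canonical basis, use Remark \ref{rem:15} to identify $H_B^k(\mathcal L(G))$ with the quotient of degree-$k$ monomials in the $x_i^*$ by the span of the elements $d(z_e^* x_{j_1}^*\cdots x_{j_{k-2}}^*)=x_a^*x_b^*x_{j_1}^*\cdots x_{j_{k-2}}^*$, and observe that this quotient has basis the monomials supported on independent sets. Your write-up is in fact more careful than the paper's in spelling out why the subspace one quotients by is exactly the coordinate subspace indexed by edge-containing $k$-sets, a point the paper leaves implicit.
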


\begin{proof}
If we label the vertices of $G$ by setting $V(G)=\{1,\ldots,n\}$, then $\mathcal L(G)$ has a basis consisting of $x_i$ for each $i\in V(G)$ and $x_{ij}$ for each $i<j$ such that $\{i,j\}\in E(G)$. By Remark \ref{rem:15}, $H_B^k(\mathcal L(G))$ is then isomorphic to the space of degree $k$ polynomials in the anticommuting variables $x_1^*,\ldots,x_n^*$ quotiented by the space of polynomials of the form
\begin{equation}
d(x_{ij}^*x_{i_1}^*\cdots x_{i_{k-2}}^*)=x_i^*x_j^*x_{i_1}^*\cdots x_{i_{k-2}}^*
\end{equation}
and thus to the space spanned by monomials of the form $x_{i_1}^*\cdots x_{i_k}^*$ such that $\{i_1,\ldots,i_k\}$ is an independent set of $G$.
\end{proof}

\begin{mydef}
The {\it independence number of a finite-dimensional 2-step nilpotent Lie algebra $\mathfrak g$} is the largest integer $\alpha(\mathfrak g)$ such that $b^{\alpha(\mathfrak g)}_B(\mathfrak g)\neq 0$. The {\it independence polynomial of a 2-step nilpotent Lie algebra $\mathfrak g$} is
\begin{equation}
I(\mathfrak g,t)=\sum_{k=0}^{\alpha(\mathfrak g)} b^k_B(\mathfrak g) t^k\,.
\end{equation}
\end{mydef}

\begin{mydef}
Let $\mathfrak g$ be a 2-step nilpotent Lie algebra. An {\it independent set of $\mathfrak g$} is a subset $S=\{\varphi_1,\ldots,\varphi_k\}\subseteq H^1(\mathfrak g)$ such that $\det(S)=\varphi_1\wedge\cdots\wedge\varphi_k$ is nonzero in $H^{k}_B(\mathfrak g)$.
\end{mydef}

\begin{prop}\label{prop:20} Let $\mathfrak g$ be a 2-step nilpotent Lie algebra of dimension $d<\infty$ and first Betti number $b$. Then
\begin{enumerate}[1)]
\item The size of the largest independent set of $\mathfrak g$ is equal to $\alpha(\mathfrak g)$.
\item If $\mathfrak h$ is an abelian subalgebra of $\mathfrak g$, then $\dim(\mathfrak h)\le\alpha(\mathfrak g)+d-b$.
\end{enumerate}
\end{prop}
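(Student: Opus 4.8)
The plan is to treat the two parts separately, working throughout in the concrete model of the basic cohomology from Remark \ref{rem:15}. Fix a basis $\{y_1,\dots,y_b,z_1,\dots,z_c\}$ of $\mathfrak g$ extending a basis of $[\mathfrak g,\mathfrak g]$, so that $c=d-b$, and identify $W:=H^1(\mathfrak g)$ with the span of $y_1^*,\dots,y_b^*$ inside $\mathcal C^\bullet_B(\mathfrak g)=\BigWedge^\bullet W$. Writing $\omega_j:=d(z_j^*)\in\BigWedge^2 W$, Remark \ref{rem:15} together with \eqref{eq:6} identifies the image of the differential in degree $k$ with $\sum_{j=1}^c \omega_j\wedge \BigWedge^{k-2}W$, so that $H^k_B(\mathfrak g)=\BigWedge^k W/\big(\sum_j \omega_j\wedge\BigWedge^{k-2}W\big)$. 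Under the canonical identification $W=(\mathfrak g/[\mathfrak g,\mathfrak g])^\vee$, each $\omega_j$ is the antisymmetric form $(\bar x,\bar y)\mapsto z_j^*([x,y])$, which is well defined by $2$-step nilpotency.

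For part 1), one inequality is immediate: an independent set $S=\{\varphi_1,\dots,\varphi_k\}$ produces by definition a nonzero class $\varphi_1\wedge\cdots\wedge\varphi_k\in H^k_B(\mathfrak g)$, whence $b^k_B(\mathfrak g)\neq 0$ and $k\le\alpha(\mathfrak g)$. For the reverse inequality I would observe that $\BigWedge^k W$ is spanned by the monomials $y_{i_1}^*\wedge\cdots\wedge y_{i_k}^*$, each of which is already decomposable; hence if $b^{\alpha(\mathfrak g)}_B(\mathfrak g)\neq 0$, not all of these monomials can vanish in $H^{\alpha(\mathfrak g)}_B(\mathfrak g)$, and any surviving monomial exhibits an independent set of size $\alpha(\mathfrak g)$.

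For part 2), let $\mathfrak h$ be abelian and set $U:=\pi(\mathfrak h)\subseteq \mathfrak g/[\mathfrak g,\mathfrak g]$ and $r:=\dim U$. Since $\ker(\pi|_{\mathfrak h})=\mathfrak h\cap[\mathfrak g,\mathfrak g]$, rank--nullity gives $\dim\mathfrak h\le r+\dim[\mathfrak g,\mathfrak g]=r+(d-b)$, so it suffices to prove $r\le\alpha(\mathfrak g)$, which by part 1) reduces to constructing an independent set of size $r$. I would pick a basis $u_1,\dots,u_r$ of $U$ and, using surjectivity of the restriction $W=(\mathfrak g/[\mathfrak g,\mathfrak g])^\vee\to U^\vee$, functionals $\varphi_1,\dots,\varphi_r\in W$ restricting to the dual basis, so that $\langle u_1\wedge\cdots\wedge u_r,\ \varphi_1\wedge\cdots\wedge\varphi_r\rangle=1$ under the pairing $\BigWedge^r(\mathfrak g/[\mathfrak g,\mathfrak g])\times\BigWedge^r W\to\R$.

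The crux is to show this decomposable class survives in $H^r_B(\mathfrak g)$, i.e.\ that $\xi:=u_1\wedge\cdots\wedge u_r$ annihilates the coboundaries $\sum_j\omega_j\wedge\BigWedge^{r-2}W$. Using the adjunction $\langle \xi,\omega_j\wedge\eta\rangle=\langle \iota_{\omega_j}\xi,\eta\rangle$ between wedging and the interior product, together with $\iota_{\omega_j}(u_1\wedge\cdots\wedge u_r)=\sum_{a<b}(-1)^{a+b-1}\omega_j(u_a,u_b)\,u_1\wedge\cdots\widehat{u_a}\cdots\widehat{u_b}\cdots\wedge u_r$, I would note that lifting each $u_a$ to $\mathfrak h$ makes every coefficient $\omega_j(u_a,u_b)=z_j^*([u_a,u_b])$ vanish, precisely because $\mathfrak h$ is abelian. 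Hence $\iota_{\omega_j}\xi=0$, so $\langle\xi,\omega_j\wedge\eta\rangle=0$ for all $j$ and all $\eta$; since $\langle\xi,\varphi_1\wedge\cdots\wedge\varphi_r\rangle\neq0$, the wedge $\varphi_1\wedge\cdots\wedge\varphi_r$ is nonzero in $H^r_B(\mathfrak g)$, giving the desired independent set. I expect this last step to be the main obstacle: the interior-product computation is exactly what converts the Lie-theoretic abelian hypothesis (vanishing of the $\omega_j$ on $\BigWedge^2 U$) into cohomological nonvanishing, and the only delicate points are the sign conventions and the adjunction formula, everything else being linear algebra.
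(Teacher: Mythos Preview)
Your proof is correct and follows essentially the same route as the paper. For part 2) the arguments are nearly identical: the paper evaluates both sides of a putative equality $\det(S)=\sum_i (dz_i^*)\omega_i$ on vectors from $\pi(\mathfrak h)$ to reach a contradiction, which is exactly your pairing $\langle\xi,\,\cdot\,\rangle$ rephrased via the interior-product adjunction. For part 1) you are actually a bit more economical: the paper picks a representative with the minimal number of monomial terms and argues that removing one term would contradict minimality, whereas you simply observe that the monomials span $\BigWedge^{\alpha(\mathfrak g)}W$, so if every monomial died in the quotient the quotient would be zero---this yields the surviving monomial in one line.
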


\begin{proof}
If $S$ is an independent set of a 2-step nilpotent Lie algebra $\mathfrak g$, then, by definition, $\det(S)$ is a non-zero element of $H^{|S|}_B(\mathfrak g)$ and thus $|S|\le \alpha(\mathfrak g)$. For the reverse inequality, fix a basis $\{y_1,\ldots,y_b,z_1,\ldots,z_c\}$ of $\mathfrak g$ as in Remark \ref{rem:15}. Let $\omega$ be a polynomial representing a non-zero class in $H_B^{\alpha(\mathfrak g)}(\mathfrak g)$ and assume that among all other representatives of the same cohomology class $\omega$ has the least number of terms (i.e.\ it is a linear combination of the least possible number of monomials). Up to overall scaling, we may assume that one of these terms is $\omega'=y_{i_1}^*\cdots y_{i_{\alpha(\mathfrak g)}}^*$. If $S=\{y_{i_1}^*,\ldots,y_{i_{\alpha(\mathfrak g)}}^*\}$, then $\det(S)$ is a non-zero element of $H_B^{\alpha(\mathfrak g)}(\mathfrak g)$, for otherwise $\omega-\omega'$ would be a polynomial in the same cohomology class as $\omega$ but with fewer terms. Hence $\mathfrak g$ admits an independent set of dimension $\alpha(\mathfrak g)$, concluding the proof of 1). 

To prove 2), let $\mathfrak h$ be an abelian subalgebra of $\mathfrak g$. Let $\{y_1,\ldots,y_k\}$ be a basis of $\pi(\mathfrak h)$ and let $\{y_1,\ldots,y_b,z_1,\ldots,z_c\}$ be an extension of this basis to $\mathfrak g$. Let $\{y_1^*,\ldots,y_b^*,z_1^*,\ldots,z_c^*\}$ be the dual basis. We claim that $S=\{y_1^*,\ldots,y_k^*\}$ is an independent set of $\mathfrak g$. Suppose not i.e.\ $\det(S)=\sum_{i=1}^c(dz_i^*\omega_i)$ for some polynomials $\omega_i$ in the $y_j^*$ variables. Then on the one hand, when viewed as a $k$-form on $\mathfrak g$, $\det(S)$ take non-zero value only when evaluated on linearly independent vectors in $\pi(\mathfrak h)$. On the other hand, $\sum_{i=1}^c(dz_i^*\omega_i)$ necessarily vanishes on $\pi(\mathfrak h)$ since $\mathfrak h$ is abelian. This contradiction shows that $S$ is independent. Therefore, using 1), we obtain $\dim(\pi(\mathfrak h))=|S|\le \alpha(\mathfrak g)$ and thus
\begin{equation}
\dim(\mathfrak h)\le \dim(\pi(\mathfrak h))+\dim([\mathfrak g,\mathfrak g]) \le \alpha(\mathfrak g)+d-b\,.
\end{equation}

\end{proof}

\begin{example}
Consider the 7-dimensional 2-step nilpotent Lie algebra $\mathfrak g$ generated by $y_1,y_2,y_3,y_4,z_1,z_2,z_3$ with non-zero Lie brackets $[y_1,y_2]=z_1=[y_3,y_4]$, $[y_1,y_3]=z_2=[y_2,y_4]$, and $[y_1,y_4]=z_3=[y_2,y_3]$. One the one hand, $\mathfrak g$ has independence number equal to 2 and first Betti number equal to 4 so that $\alpha(\mathfrak g)+d-b=5$. On the other hand, maximal abelian subalgebras are of dimension 4 (generated by $z_1,z_2,z_3$ and a single linear combination of the form $a_1y_1+a_2y_2+a_3y_3+a_4y_4$). Hence in this example, the inequality in statement 2) of Proposition \eqref{prop:20} is strict. 
\end{example}

\begin{theorem}
The independence polynomial of the Heisenberg Lie algebra of dimension $2n+1$ is
\begin{equation}
I(\mathfrak h_n,t)= \sum_{k=0}^n \left(\binom{2n}{k}-\binom{2n}{k-2}\right) t^k\,.
\end{equation}
\end{theorem}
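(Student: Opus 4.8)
The plan is to reduce the computation of the basic Betti numbers $b_B^k(\mathfrak h_n)$ to a statement in linear symplectic algebra, namely the hard Lefschetz theorem on the exterior algebra of a symplectic vector space. First I would record the relevant data for $\mathfrak h_n$: the bracket relations $[y_i,y_{i+n}]=z$ show that $[\mathfrak h_n,\mathfrak h_n]$ is the line spanned by $z$, so in the notation of Remark \ref{rem:15} we have $c=1$, the single central variable is $z^*$, and $b=b^1(\mathfrak h_n)=2n$ with $\mathcal C_B^\bullet(\mathfrak h_n)=\bigwedge^\bullet V^*$ for $V^*=\langle y_1^*,\ldots,y_{2n}^*\rangle$. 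Specializing the formula \eqref{eq:6} for the differential to this case gives $d=\omega\,\partial/\partial z^*$, where $\omega=\sum_{i=1}^n y_i^* y_{i+n}^*$ is the standard (nondegenerate) symplectic form on $V^*$.

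Next I would identify the quotient described in Remark \ref{rem:15} explicitly. Since $d$ is an odd derivation annihilating every polynomial in the $y_i^*$ alone, for any $\eta\in\bigwedge^{k-2}V^*$ one computes $d(z^*\eta)=\omega\wedge\eta$. Hence the subspace of $\mathcal C_B^k(\mathfrak h_n)$ by which one quotients to obtain $H_B^k(\mathfrak h_n)$ is precisely the image of the Lefschetz operator $L=\omega\wedge(-)\colon\bigwedge^{k-2}V^*\to\bigwedge^k V^*$, so that
\begin{equation*}
b_B^k(\mathfrak h_n)=\binom{2n}{k}-\dim\bigl(L(\textstyle\bigwedge^{k-2}V^*)\bigr).
\end{equation*}
The whole computation therefore comes down to determining the rank of $L$ in each degree.

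The key step is to invoke the $\mathfrak{sl}_2$-representation structure on $\bigwedge^\bullet V^*$ afforded by the nondegeneracy of $\omega$ (hard Lefschetz): the raising operator $L\colon\bigwedge^{j}V^*\to\bigwedge^{j+2}V^*$ is injective for $j\le n-1$ and surjective for $j\ge n-1$. Applying this with $j=k-2$, for $k\le n$ the operator $L$ is injective, so $\dim L(\bigwedge^{k-2}V^*)=\binom{2n}{k-2}$ and $b_B^k(\mathfrak h_n)=\binom{2n}{k}-\binom{2n}{k-2}$; while for $k\ge n+1$ the operator $L$ is surjective, so $b_B^k(\mathfrak h_n)=0$. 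Since $\binom{2n}{n}-\binom{2n}{n-2}>0$, this shows $\alpha(\mathfrak h_n)=n$, and substituting the computed coefficients into the definition of $I(\mathfrak h_n,t)$ yields the claimed formula.

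The main obstacle is the Lefschetz injectivity/surjectivity statement used in the last step; although standard, it is the only non-formal ingredient, and for a self-contained treatment I would either cite the $\mathfrak{sl}_2$-action on the exterior algebra of a symplectic space or give the short direct argument that multiplication by the nondegenerate form $\omega$ has maximal rank in the relevant degrees. Everything else is bookkeeping: matching the general quotient of Remark \ref{rem:15} to the cokernel of $L$ and reading off the ranks degree by degree.
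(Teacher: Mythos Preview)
Your argument is correct, but it proceeds along a different route from the paper. The paper does not compute the rank of $L=\omega\wedge(-)$ directly. Instead, for $k\le n$ it only uses the trivial bound $b_B^k(\mathfrak h_n)\ge\binom{2n}{k}-\binom{2n}{k-2}$ coming from the dimension of the domain of $L$, and then squeezes this against the opposite inequality $b_B^k(\mathfrak h_n)\le b^k(\mathfrak h_n)=\binom{2n}{k}-\binom{2n}{k-2}$, the right-hand side being Santharoubane's formula for the full Betti numbers (already quoted earlier in the paper). Having thus shown $H_B^k=H^k$ for $k\le n$, the paper disposes of the range $k>n$ by Poincar\'e duality: basic classes wedge trivially with basic classes (they miss $z^*$), so once all of $H^k$ is basic, $H_B^{2n+1-k}$ must vanish. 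Your approach trades these two black boxes (Santharoubane's computation and Poincar\'e duality) for a single one (the $\mathfrak{sl}_2$/hard Lefschetz structure on $\bigwedge^\bullet V^*$), yielding a more self-contained computation at the cost of importing a fact not otherwise used in the paper; the paper's version, by contrast, stays closer to ingredients already on the table.
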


\begin{proof}
Consider first the case $k\le n$. Let $S_k$ be the $\binom{2n}{k}$ space of degree $k$ polynomials in the anticommuting variables $y_1^*,\ldots,y_{2n}^*$. By Remark \ref{rem:15}, $H_B^k(\mathfrak h_n)$ is isomorphic to the quotient of $S_k$ by its subspace $d(zS_k)$. Hence, $b_B^k(\mathfrak h_n)\ge \binom{2n}{k}-\binom{2n}{k-2}=b^k(\mathfrak h_n)$. On the other hand, by definition of basic cohomology,  $b_B^k(\mathfrak h_n)\le b^k(\mathfrak h_n)$ for all $k$. Therefore, $b_B^k(\mathfrak h_n)= b^k(\mathfrak h_n)$ for all $k\le n$. Equivalently, all cohomology classes in degree less or equal than $n$ are basic. By Poincar\'e duality, this means that none of the cohomology classes in degree greater than $k$ are i.e.\ $b_B^k(\mathfrak h_n)=0$ for all $k>n$.
\end{proof}

\begin{cor}\label{cor:20}
$\alpha(\mathfrak h_n)=n$.
\end{cor}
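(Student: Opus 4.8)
The plan is to read the corollary off directly from the theorem immediately preceding it. By definition, $\alpha(\mathfrak h_n)$ is the largest integer $k$ for which $b_B^k(\mathfrak h_n)\neq 0$, equivalently the degree of the independence polynomial $I(\mathfrak h_n,t)$. The theorem just established expresses this polynomial as $\sum_{k=0}^n\left(\binom{2n}{k}-\binom{2n}{k-2}\right)t^k$, and in particular records that $b_B^k(\mathfrak h_n)=0$ for every $k>n$. This at once gives $\alpha(\mathfrak h_n)\le n$, and reduces the corollary to checking that the top coefficient $b_B^n(\mathfrak h_n)=\binom{2n}{n}-\binom{2n}{n-2}$ is nonzero.

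The one remaining step is the elementary observation that the central binomial coefficient strictly dominates $\binom{2n}{n-2}$. For $n=1$ this is immediate, since $\binom{2}{n-2}=\binom{2}{-1}=0$ while $\binom{2}{1}=2$; for $n\ge 2$ I would compare the two coefficients via the ratio $\binom{2n}{n}\big/\binom{2n}{n-2}=(n+2)(n+1)\big/\bigl(n(n-1)\bigr)$ and note that $(n+2)(n+1)>n(n-1)$. Hence $b_B^n(\mathfrak h_n)>0$ for all $n\ge 1$, which together with the degree bound $\alpha(\mathfrak h_n)\le n$ yields $\alpha(\mathfrak h_n)=n$.

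There is no genuine obstacle to overcome here: all the substantive content — the identification $b_B^k(\mathfrak h_n)=b^k(\mathfrak h_n)$ in degrees $k\le n$ and the Poincar\'e-duality argument forcing $b_B^k(\mathfrak h_n)=0$ above $n$ — is already carried out in the theorem, so the corollary is a one-line consequence once the positivity of the single binomial difference $\binom{2n}{n}-\binom{2n}{n-2}$ is confirmed.
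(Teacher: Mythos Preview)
Your proposal is correct and is exactly the intended reading of the corollary: the paper offers no separate proof, since $\alpha(\mathfrak h_n)=n$ is meant to be immediate from the explicit form of $I(\mathfrak h_n,t)$ in the preceding theorem. Your verification that the top coefficient $\binom{2n}{n}-\binom{2n}{n-2}$ is strictly positive is the only detail left implicit, and your ratio argument handles it cleanly.
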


\section{Bounds on the Independence Number}

In this section we establish efficiently computable upper and lower bounds for the independence number of an arbitrary finite-dimensional 2-step nilpotent Lie algebra.

\begin{theorem}\label{thm:27}
Let $\mathfrak g$ be 2-step nilpotent Lie algebra of dimension $d<\infty$ whose first Betti number is equal to $b$. Then
\begin{equation}\label{eq:9}
\alpha(\mathfrak g) \le \frac{1}{2}+\sqrt{\frac{1}{4}+b^2+b-2d}\,.
\end{equation}
\end{theorem}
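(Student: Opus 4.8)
The plan is to reduce \eqref{eq:9} to the combinatorial inequality
\begin{equation}\label{eq:reduction}
\binom{\alpha(\mathfrak{g})}{2}\le \binom{b}{2}-(d-b)\,,
\end{equation}
which, upon clearing denominators and solving the resulting quadratic in $\alpha(\mathfrak{g})$, is equivalent to the stated bound. This mirrors Hansen's argument for graphs, where an independent set of size $\alpha$ is a clique of the same size in the complement and therefore accounts for $\binom{\alpha}{2}$ of the $\binom{|V(G)|}{2}-|E(G)|$ non-edges.

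To set up the algebra, I would fix a basis $\{y_1,\ldots,y_b,z_1,\ldots,z_c\}$ as in Remark~\ref{rem:15}, with $c=d-b$, and put $\eta_k=dz_k^*\in\BigWedge^2 H^1(\mathfrak{g})$. Since $dy_i^*=0$, one has $d(z_k^*\omega)=\eta_k\wedge\omega$ for every polynomial $\omega$ in the $y^*$-variables, so Remark~\ref{rem:15} identifies $H_B^\bullet(\mathfrak{g})$ with the quotient of $\mathcal{C}^\bullet_B(\mathfrak{g})=\BigWedge^\bullet H^1(\mathfrak{g})$ by the homogeneous ideal $I=\langle\eta_1,\ldots,\eta_c\rangle$. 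A nonzero relation among the $\eta_k$ would exhibit a nonzero closed form in $\spn(z_1^*,\ldots,z_c^*)$, which is impossible since such a form is a nonzero functional on $[\mathfrak{g},\mathfrak{g}]$ and hence not closed. Thus the $\eta_k$ are linearly independent and span a subspace $W\subseteq\BigWedge^2 H^1(\mathfrak{g})$ with $\dim W=c$.

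By Proposition~\ref{prop:20}(1) there is an independent set $\{\varphi_1,\ldots,\varphi_{\alpha}\}$, where $\alpha=\alpha(\mathfrak{g})$, such that $\varphi_1\wedge\cdots\wedge\varphi_{\alpha}$ is nonzero in $H_B^{\alpha}(\mathfrak{g})$, equivalently not in the degree-$\alpha$ component $I_{\alpha}$ of $I$. Let $U=\spn(\varphi_1,\ldots,\varphi_{\alpha})\subseteq H^1(\mathfrak{g})$, so $\dim U=\alpha$. The heart of the proof is the claim that $W\cap\BigWedge^2 U=0$. Indeed, if $0\neq\eta=\sum_k a_k\eta_k$ lay in $\BigWedge^2 U$, then by perfectness of the wedge pairing $\BigWedge^2 U\times\BigWedge^{\alpha-2}U\to\BigWedge^{\alpha}U$ there would be $\omega\in\BigWedge^{\alpha-2}U$ with $\eta\wedge\omega=\varphi_1\wedge\cdots\wedge\varphi_{\alpha}$; but $\eta\wedge\omega=\sum_k a_k(\eta_k\wedge\omega)\in I_{\alpha}$, contradicting the choice of the independent set. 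This disjointness, which is exactly where the independence hypothesis is used, is the step I expect to require the most care.

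Granting the claim, the composite $\BigWedge^2 U\hookrightarrow\BigWedge^2 H^1(\mathfrak{g})\to\BigWedge^2 H^1(\mathfrak{g})/W$ is injective, whence
\begin{equation}\label{eq:dimcount}
\binom{\alpha}{2}=\dim\BigWedge^2 U\le \dim\left(\BigWedge^2 H^1(\mathfrak{g})/W\right)=\binom{b}{2}-c=\binom{b}{2}-(d-b)\,.
\end{equation}
This is precisely \eqref{eq:reduction}, and rearranging it as a quadratic constraint on $\alpha$ yields \eqref{eq:9}.
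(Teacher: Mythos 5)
Your proposal is correct and takes essentially the same route as the paper's proof: both reduce the bound to the dimension count $\binom{\alpha(\mathfrak g)}{2}\le\binom{b}{2}-(d-b)$ obtained by injecting $\BigWedge^2\spn(S)$ into $H_B^2(\mathfrak g)$ and then solving the resulting quadratic. Your wedge-pairing argument simply fills in the details of the injectivity step that the paper asserts with the phrase ``since $S$ has non-zero determinant,'' and your verification that the $\eta_k=dz_k^*$ are linearly independent justifies the equality $b_B^2(\mathfrak g)\le\binom{b}{2}-\dim([\mathfrak g,\mathfrak g])$ used implicitly there.
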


\begin{proof} By Proposition \ref{prop:20}, there exists an independent set $S$ of $\mathfrak g$ such that  $|S|=\alpha(\mathfrak g)$. Since $S$ has non-zero determinant, $\BigWedge^2 {\rm span}(S)$ is subspace of  $H_B^2(\mathfrak g)$ of dimension $\binom{\alpha(\mathfrak g)}{2}$. Therefore, by Remark \ref{rem:15}, 
\begin{equation}
(\alpha(\mathfrak g))^2-\alpha(\mathfrak g)\le 2b_B^2(\mathfrak g)\le 2\binom{b}{2}-2\dim([\mathfrak g,\mathfrak g])=b^2+b-2d
\end{equation}
from which \eqref{eq:9} easily follows.
\end{proof}

\begin{cor}
Let $\mathfrak g$ be 2-step nilpotent Lie algebra of dimension $d<\infty$ whose first Betti number is equal to $b$. If $\mathfrak h$ is an abelian subalgebra of $\mathfrak g$, then 
\begin{equation}
\dim(\mathfrak h)\le \frac{1}{2}+\sqrt{\frac{1}{4}+b^2+b-2d}+d-b\,.
\end{equation}
\end{cor}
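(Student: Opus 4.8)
The plan is to combine the two results already at our disposal, so the proof reduces to a single substitution. First I would invoke part 2) of Proposition \ref{prop:20}, which asserts that any abelian subalgebra $\mathfrak h$ of $\mathfrak g$ satisfies $\dim(\mathfrak h)\le \alpha(\mathfrak g)+d-b$. This inequality isolates all the content of the statement into a bound on the single quantity $\alpha(\mathfrak g)$, shifted by the constant $d-b$.

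Next I would apply Theorem \ref{thm:27}, which supplies exactly the bound needed on that isolated quantity, namely
\begin{equation*}
\alpha(\mathfrak g) \le \frac{1}{2}+\sqrt{\frac{1}{4}+b^2+b-2d}\,.
\end{equation*}
Substituting this estimate for $\alpha(\mathfrak g)$ into the inequality from the previous step yields the claimed bound immediately. The substitution is legitimate because $\alpha(\mathfrak g)$ appears in the Proposition \ref{prop:20} inequality with coefficient $+1$, so replacing it by a larger quantity only weakens the inequality in the correct direction; the additive term $d-b$ is carried along unchanged.

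Because both input inequalities are already established, there is essentially no obstacle to overcome. The only point worth flagging is that the expression under the radical, $\tfrac14+b^2+b-2d$, is nonnegative whenever $\mathfrak g$ admits the independent set produced in Proposition \ref{prop:20}; this is implicit in the validity of Theorem \ref{thm:27} and so requires no separate argument here. Thus the corollary follows as a direct concatenation of Proposition \ref{prop:20} and Theorem \ref{thm:27}.
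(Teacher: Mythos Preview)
Your proposal is correct and follows exactly the paper's own proof, which simply reads ``Combine Theorem \ref{thm:27} with Proposition \ref{prop:20}.'' The additional remarks you make about monotonicity and the nonnegativity of the radicand are sound but unnecessary elaborations on what is in the end a one-line substitution.
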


\begin{proof}
Combine Theorem \ref{thm:27} with Proposition \ref{prop:20}.
\end{proof}

\begin{rem}\label{rem:22}
It follows from Remark \ref{rem:12} that in the Dani-Mainkar case \eqref{eq:9} specializes to the upper bound in \eqref{eq:3}.
\end{rem}

\begin{rem}
In light of Remark \ref{rem:22}, it is natural to ask whether the lower bound in \eqref{eq:3} also generalizes to 2-step nilpotent Lie algebras i.e.\ if
\begin{equation}\label{eq:12}
\frac{b^2}{2d-b}\le \alpha(\mathfrak g)
\end{equation}
whenever $\mathfrak g$ is a 2-step Lie algebra of dimension $d<\infty$ whose first Betti number is equal to $b$. By Remark \ref{cor:20}, this is false for all $\mathfrak g=\mathfrak h_n$ with $n\ge 2$ since in this case the RHS of \eqref{eq:12} is equal to $\frac{2n^2}{n+1}>n$. Instead, we have the following:
\end{rem}

\begin{theorem}\label{thm:24}
    Let $\mathfrak g$ be a 2-step nilpotent Lie algebra of dimension $d<\infty$ and  with first Betti equal to $b$. 
\begin{enumerate}[1)]    
\item If $d<b+1$, then $\alpha(\mathfrak g)=d$.
\item If $d=b+1$, then  $\alpha(\mathfrak g)\ge \frac{d-1}{2}$.
\item If $d>b+1$, then  
    \begin{equation}\label{eq:13}
    \alpha(\mathfrak g)\ge \frac{\sqrt{4(d-b-1)(b^2+b)+(d+b+1)^2}-(d+b+1)}{2(d-b-1)}\,.
\end{equation}
\end{enumerate}
\end{theorem}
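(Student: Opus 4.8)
The plan is to pass to the exterior-algebra model of basic cohomology from Remark~\ref{rem:15} and distill a single inequality relating $\alpha(\mathfrak g)$, $b$ and $d$. Fix a basis $\{y_1,\dots,y_b,z_1,\dots,z_c\}$ as in that remark, so $c=\dim[\mathfrak g,\mathfrak g]=d-b$; write $V^*=\spn\{y_1^*,\dots,y_b^*\}$ and $\omega_j=dz_j^*\in\BigWedge^2 V^*$ for $1\le j\le c$. Because $d$ is an odd derivation killing every $y_i^*$, we have $d(z_j^*\mu)=\omega_j\wedge\mu$ for each monomial $\mu$ in the $y_i^*$, so the relation space in Remark~\ref{rem:15} is exactly $J_k:=\sum_{j=1}^c\omega_j\wedge\BigWedge^{k-2}V^*$ and $b_B^k(\mathfrak g)=\binom{b}{k}-\dim J_k$. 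I will assume $b\ge 1$ throughout, the case $b=0$ being trivial.

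First I would dispose of 1). Since $d\ge b$ always holds, the hypothesis $d<b+1$ forces $d=b$ and $c=0$, whence $J_k=0$ and $b_B^k(\mathfrak g)=\binom{b}{k}\ne 0$ for every $k\le b=d$; thus $\alpha(\mathfrak g)=d$.

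The engine for 2) and 3) is one observation applied in degree $\alpha(\mathfrak g)+1$. By definition of the independence number, $b_B^{\alpha(\mathfrak g)+1}(\mathfrak g)=0$, i.e.\ $J_{\alpha(\mathfrak g)+1}=\BigWedge^{\alpha(\mathfrak g)+1}V^*$, so $\dim J_{\alpha(\mathfrak g)+1}=\binom{b}{\alpha(\mathfrak g)+1}$. On the other hand $J_{\alpha(\mathfrak g)+1}$ is spanned by the products $\omega_j\wedge\mu$ with $1\le j\le c$ and $\mu$ a degree-$(\alpha(\mathfrak g)-1)$ monomial in the $y_i^*$, so $\dim J_{\alpha(\mathfrak g)+1}\le c\binom{b}{\alpha(\mathfrak g)-1}$. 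Comparing yields
\[ \binom{b}{\alpha(\mathfrak g)+1}\le c\binom{b}{\alpha(\mathfrak g)-1}. \]
Dividing by $\binom{b}{\alpha(\mathfrak g)-1}$, using $\binom{b}{k+1}/\binom{b}{k-1}=\tfrac{(b-k)(b-k+1)}{k(k+1)}$ and substituting $c=d-b$, this rearranges to the quadratic inequality $(d-b-1)\,\alpha(\mathfrak g)^2+(d+b+1)\,\alpha(\mathfrak g)-(b^2+b)\ge 0$.

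It remains to read off the two cases from the sign of the leading coefficient $d-b-1$. If $d=b+1$ the quadratic term vanishes and the inequality becomes linear, giving $\alpha(\mathfrak g)\ge\frac{b^2+b}{2(b+1)}=\frac{b}{2}=\frac{d-1}{2}$, which is 2). If $d>b+1$ the leading coefficient is positive, so the parabola $Q(x)=(d-b-1)x^2+(d+b+1)x-(b^2+b)$ has $Q(0)=-(b^2+b)<0$ and hence a single positive root; since $Q(\alpha(\mathfrak g))\ge 0$ and $\alpha(\mathfrak g)>0$, we conclude that $\alpha(\mathfrak g)$ is at least that positive root, which is exactly the right-hand side of \eqref{eq:13}. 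The computation is elementary once the key inequality is in hand; the only points needing care are the identification $b_B^k(\mathfrak g)=\binom{b}{k}-\dim J_k$ via Remark~\ref{rem:15} and the harmless degenerate bookkeeping when $\alpha(\mathfrak g)=b$ (then $\binom{b}{\alpha(\mathfrak g)+1}=0$ and the inequality still holds). I expect the only mild obstacle to be making the spanning-set dimension count $\dim J_{\alpha(\mathfrak g)+1}\le c\binom{b}{\alpha(\mathfrak g)-1}$ fully rigorous; everything downstream is routine algebra.
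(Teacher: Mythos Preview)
Your argument is correct and follows essentially the same route as the paper: both proofs use Remark~\ref{rem:15} to observe that $H_B^{\alpha(\mathfrak g)+1}(\mathfrak g)=0$ forces the surjectivity of the multiplication map $\bigoplus_{j=1}^c \BigWedge^{\alpha(\mathfrak g)-1}V^*\to \BigWedge^{\alpha(\mathfrak g)+1}V^*$, yielding the key inequality $\binom{b}{\alpha(\mathfrak g)+1}\le (d-b)\binom{b}{\alpha(\mathfrak g)-1}$, and then solve the resulting quadratic. Your write-up is in fact more explicit than the paper's, which leaves the algebraic manipulations to the reader.
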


\begin{proof} If $d<b+1$, then $\mathfrak g$ is abelian and thus $\alpha(\mathfrak g)=d$. Assume $d\ge b+1$. By definition of independence number, $H_B^{\alpha(\mathfrak g)+1}(\mathfrak g)=0$. Given a basis $\{y_1,\ldots,y_b,z_1,\ldots,z_c\}$ of $\mathfrak g$ as in Remark \ref{rem:15}, we conclude that every polynomial of degree $\alpha(\mathfrak g)+1$ in the anticommuting variables $y_1^*,\ldots,y_b^*$ is of the form $\sum_{i=1}^c (dz_i^*) \omega_i$ where each $\omega_i$ is a degree $\alpha(\mathfrak g)-1$ polynomial in the variables $y_1^*,\ldots,y_b^*$. Since the dimension of the domain of every surjective linear transformation is at least the dimension of its corresponding codomain, we obtain
\begin{equation}
\binom{b}{\alpha(\mathfrak g)+1}\le (d-b) \binom{b}{\alpha(\mathfrak g)-1}
\end{equation}
which, combined with straightforward algebraic manipulations, proves 2) and 3).
\end{proof}

\begin{example}
If $\mathfrak g=\mathfrak h_n$, then $d=b+1$ and Corollary \ref{cor:20} implies that lower bound established by Theorem \ref{thm:24} is exact.
\end{example}

\begin{rem}
When $\mathfrak g$ is the Dani-Mainkar Lie algebra of a finite simple graph $G$, \eqref{eq:13} gives an efficiently computable lower bound for the independence number of graphs. As shown in \cite{AGGHK}, this lower bound is always dominated by the lower bound in \eqref{eq:3}. Nevertheless, the proof of Theorem \ref{thm:24} easily extends to $k$-uniform (i.e.\ with only $k$-ary operations) 2-step nilpotent $L_\infty$-algebras. For some $k$-uniform hypergraphs, the resulting lower bound (for which an alternate combinatorial proof was supplied in \cite{AGGHK}) improves on known results in the literature on independence number of hypergraphs \cites{CaroTuza91, CsabaPlickShokoufandeh12, Eustis13}.
\end{rem}

\section{The basic Laplacian}

In this section we introduce a Laplacian operator which depends on the additional datum of an inner product and study the partition function of the associated supersymmetric quantum mechanics. Most of the material of this section can be straightforwardly generalized to finite-dimensional 2-step nilpotent $L_\infty$-algebras.

\begin{mydef} A {\it metric pair} $(\mathfrak g,g)$ consists of a finite-dimensional 2-step nilpotent Lie algebra $\mathfrak g$ and a (positive definite) inner product $g$ on the real vector space underlying $\mathfrak g$ (we do not require compatibility with the Lie bracket). Then $g$ defines a Hodge star operator $\star$ on $\mathcal C^\bullet(\mathfrak g)$ and consequently the {\it adjoint Cartan-Chevalley-Eilenberg operator} $d^*=\star^{-1}d\star$. The corresponding {\it Laplacian operator} acting on $\mathcal C^\bullet(\mathfrak g)$ is $\Delta=dd^*+d^*d$. We define the {\it basic Laplacian operator} as the restriction $\Delta_B$ of $\Delta$ to $\mathcal C^\bullet_B(\mathfrak g)$.
\end{mydef}

\begin{rem}\label{rem:33} 
Let $(\mathfrak g,g)$ be a metric pair and let orthonormal basis $\{y_1,\ldots,y_b,z_1,\ldots,z_c\}$ of $\mathfrak g$ with respect to which the Lie bracket can be written as $[y_i,y_j]=\sum_{k=1}^c \gamma_{i,j}^k z_k$ for some real structure constants $\gamma_{i,j}^k$. Then $d^*$ acts on the dual variables as the second-order differential operator
\begin{equation}\label{eq:16}
d^*= \sum_{k=1}^c \sum_{1\le i<j\le b} \gamma_{i,j}^k z_k^* \frac{\partial}{\partial y_j^*} \frac{\partial}{\partial y_i^*}\,.
\end{equation}
Taking into account that $d$ acts trivially on $\mathcal C_B^\bullet(\mathfrak g)$, combining \eqref{eq:6} with \eqref{eq:16} we obtain
\begin{equation}\label{eq:17}
\Delta_B=dd^*=\sum_{k=1}^c \sum_{1\le i<j\le b} \sum_{1\le r<s\le b} \gamma_{i,j}^k \gamma_{r,s}^k y_r^* y_s^* \frac{\partial}{\partial y_j^*} \frac{\partial}{\partial y_i^*}\,. 
\end{equation}
\end{rem}

\begin{example}
If $G$ is a finite simple graph, then $\mathcal L(G)$ comes equipped with the canonical inner product $g_G$ with respect to which the basis labeled by vertices and edges of $G$ is orthonormal. Then \eqref{eq:17} specializes to 
\begin{equation}\label{eq:18}
\Delta_B=\sum x_i^* x_j^* \frac{\partial}{\partial x_j^*}\frac{\partial}{\partial x_i^*}\,,
\end{equation}
where the sum is extended over all $i,j\in \{1,\ldots, |V(G)|\}$ such that $i<j$ and $\{i,j\}\in E(G)$.
\end{example}

\begin{example}\label{ex:35}
Consider the metric pair $(\mathfrak h_2,h)$ with $h$ the inner product with respect to which the basis $\{y_1,y_2,y_3,y_4,z\}$ of Example \ref{ex:5} is orthonormal. The basic Laplacian of $(\mathfrak h_2,h)$
\begin{equation}
\Delta_B=(y_1^*y_2^*+y_3^*y_4^*)\left(  \frac{\partial}{\partial y_2^*} \frac{\partial}{\partial y_1^*}+ \frac{\partial}{\partial y_4^*} \frac{\partial}{\partial y_3^*}\right)\,.
\end{equation}
Let $\mathfrak g$ be the 2-step nilpotent Lie algebra with basis $\{u_1,u_2,u_3,u_4,w\}$ and non-zero brackets $[u_1,u_2]=[u_2,u_3]=[u_3,u_4]=w$ and let $g$ be the inner product with respect to which $\{u_1,u_2,u_3,u_4,w\}$ is orthonormal. The change of variables $y_1=u_1$, $y_2=u_2+\frac{1}{2}u_4$, $y_3=u_3+\frac{1}{2}u_1$, $y_4=u_4$, and $z=w$ shows that $\mathfrak g$ is isomorphic to $\mathfrak h_2$. Through this identification, the inner product $g$ can be equivalently thought of as the inner product $h'$ on $\mathfrak h_2$ represented by the matrix
\begin{equation}
\begin{bmatrix}
1 & 0 & \frac{1}{2} & 0 & 0\\
0 & \frac{5}{4} & 0 & \frac{1}{2} & 0\\
\frac{1}{2} & 0 & \frac{5}{4} & 0 & 0\\
0 & \frac{1}{2} & 0 & 1 & 0\\
0 & 0 & 0 &0 &1
\end{bmatrix}
\end{equation}
with respect to the basis $\{y_1,y_2,y_3,y_4,z\}$. As an illustration of the dependence of the basic Laplacian on the choice of inner product, for the metric pair $(\mathfrak g,g)=(\mathfrak h_2,h')$ we obtain
\begin{align*}
\Delta_B&=(u_1^*u_2^*+u_2^*u_3^*+u_3^*u_4^*)\left(\frac{\partial}{\partial u_2^*} \frac{\partial}{\partial u_1^*}+\frac{\partial}{\partial u_3^*} \frac{\partial}{\partial u_2^*}+\frac{\partial}{\partial u_4^*} \frac{\partial}{\partial u_3^*} \right)\\
&=(y_1^*y_2^*+y_3^*y_4^*)\left( \frac{3}{2} \frac{\partial}{\partial y_2^*} \frac{\partial}{\partial y_1^*}+\frac{3}{2} \frac{\partial}{\partial y_4^*} \frac{\partial}{\partial y_3^*}+ \frac{\partial}{\partial y_3^*} \frac{\partial}{\partial y_2^*}- \frac{5}{4}\frac{\partial}{\partial y_4^*} \frac{\partial}{\partial y_1^*} \right)\,.
\end{align*}
\end{example}

\begin{mydef}
Let $(\mathfrak g,g)$ be a metric pair and let $\Delta_B$ be the corresponding basic Laplacian operator. The {\it basic partition function of $(\mathfrak g,g)$} is the trace
\begin{equation}\label{eq:21}
Z_{\mathfrak g,g}(s,t)={\rm Tr}(s^{\Delta_B}t^{\deg})
\end{equation}
taken over $\mathcal C^\bullet_B(\mathfrak g)$, where the exponent of $t$ denotes the {\it degree} operator diagonalized by setting $\deg\omega=m\omega$ whenever $\omega$ is a monomial of degree $m$. 
\end{mydef}

\begin{rem}
In the notation of Remark \ref{rem:33}, $\mathcal C^\bullet(\mathfrak g)$ can be interpreted as the Hilbert space of states of a quantum mechanical system consisting of two species of fermions (namely, $y_1^*,\ldots,y_b^*$ and $z_1^*,\ldots,z_c^*$) and supersymmetric Hamiltonian $H=\frac{1}{2}\Delta$. A standard argument (see e.g.\ \cite{Hori03}) shows that $H^\bullet(\mathfrak g)$ is isomorphic to $\ker(\Delta)$, offering an alternate approach to the calculation of the cohomology of 2-step nilpotent Lie algebras \cites{Kostant61,Sigg96}. Moreover, $\mathcal C_B^\bullet(\mathfrak g)$ can be thought of as the sector consisting of states in which only fermions from the first species are excited and $\frac{1}{2}\Delta_B$ as the operator whose eigenvalues measure their energy. It is then natural to look at \eqref{eq:21} as the partition function for this sector. Furthermore, since $\ker(\Delta_B)\cong H^\bullet_B(\mathfrak g)$, we obtain that for any metric pair $(\mathfrak g,g)$ that $Z_{\mathfrak g,g}(0,t)$ is equal to the independence polynomial $I(\mathfrak g,t)$.
\end{rem}

\begin{example}
Let $G$ be a finite simple graph. It follows from \eqref{eq:18} that $\Delta_B$ is diagonalized by the monomial basis of $\mathcal C^\bullet_B(\mathfrak g)$ and that the eigenvalue corresponding to the monomial $\omega=y_{i_1}^*\cdots y_{i_k}^*$ is equal to the number of edges in the induced subgraph $G[\{i_1,\ldots,i_k\}]$. Since $\deg(\omega)=k\omega$, we conclude that 
\begin{equation}
Z_{\mathcal L(G),g_G}(s,t)=F(G,1,1,s,t)\,,
\end{equation}
where $F$ denotes the generalized subgraph counting polynomial \eqref{eq:4}.
\end{example}

\begin{mydef}
Let $[n]=\{1,\ldots,n\}$. The {\it Johnson graph} $J(n,k)$ is the graph whose vertices are the $k$-element subsets of $[n]$, i.e. $V(J(n,k))=\binom{[n]}{k}$, and such that $S_1,S_2\subseteq [n]$ share an edge if and only if $|S_1\cap S_2|=k-1$.
\end{mydef}

\begin{lem}[\cite{BrouwerCohenNeumaier89}]\label{lem:38}
If $A_{n,k}$ is the adjacency matrix of the Johnson Graph $J(n,k)$, then
\begin{equation}
\det(xI-A_{n,k})=\prod_{j=0}^{\min(k,n-k)}(x-\theta_{n,k,j})^{f_{n,j}}\,,
\end{equation}
where $\theta_{n,k,j}=(k-j)(n-k-j)-j$, $f_{n,j}=\binom{n}{j}-\binom{n}{j-1}$, and $I$ stands for the $\binom{n}{k}\times \binom{n}{k}$ identity matrix.
\end{lem}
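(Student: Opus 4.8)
The plan is to realize $A_{n,k}$ inside an $\mathfrak{sl}_2$-action on the full Boolean lattice, which produces the eigenvalues and their multiplicities simultaneously. Let $M_k=\R^{\binom{[n]}{k}}$ be the real vector space with basis the $k$-subsets of $[n]$, and introduce the raising and lowering operators $U_k\colon M_k\to M_{k+1}$, $U_k(S)=\sum_{x\notin S}(S\cup\{x\})$, and $D_k\colon M_k\to M_{k-1}$, $D_k(S)=\sum_{x\in S}(S\setminus\{x\})$. First I would establish two combinatorial operator identities on $M_k$ by a direct count of how each $(y,x)$-swap arises: $D_{k+1}U_k=A_{n,k}+(n-k)I$ (the $n-k$ diagonal terms come from adjoining then removing the same $x\notin S$) and $U_{k-1}D_k=A_{n,k}+kI$. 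Subtracting gives $U_{k-1}D_k-D_{k+1}U_k=(2k-n)I$, so that $U=\bigoplus_k U_k$, $D=\bigoplus_k D_k$ and $H=\bigoplus_k(2k-n)\,\mathrm{id}_{M_k}$ satisfy $[U,D]=H$, $[H,U]=2U$, $[H,D]=-2D$. Thus $\bigoplus_{k=0}^n M_k$ is a finite-dimensional $\mathfrak{sl}_2$-module whose weight space of weight $2k-n$ is $M_k$, and $A_{n,k}=D_{k+1}U_k-(n-k)I$.

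Next I would exploit complete reducibility. Since finite-dimensional $\mathfrak{sl}_2$-modules are semisimple, the number of irreducible summands with lowest weight $2j-n$ (equivalently, with lowest level $j$) equals $\dim M_j-\dim M_{j-1}=\binom{n}{j}-\binom{n}{j-1}=f_{n,j}$ for $0\le j\le n/2$. For each lowest-weight vector $v\in M_j$ (so $D_j v=0$), the vectors $U^{k-j}v$ with $j\le k\le n-j$ span the irreducible it generates; as $v$ ranges over a basis of the $f_{n,j}$-dimensional primitive space in level $j$, the corresponding vectors $U^{k-j}v$ in level $k$ form an eigenbasis for $A_{n,k}$. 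The admissible indices are exactly those with $j\le k\le n-j$, i.e.\ $0\le j\le\min(k,n-k)$, matching the range in the statement, and $\sum_{j=0}^{\min(k,n-k)} f_{n,j}$ telescopes to $\binom{n}{\min(k,n-k)}=\binom{n}{k}=\dim M_k$, so nothing is missed.

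Finally I would compute the eigenvalue on $U^{k-j}v$. Using $DU=UD-H$ and induction on $m$, one shows that for a primitive $v$ at level $j$ one has $D\,U^m v=m(n-2j-m+1)\,U^{m-1}v$; taking $m=k-j+1$ shows that $D_{k+1}U_k$ acts on $U^{k-j}v$ by the scalar $(k-j+1)(n-k-j)$, whence $A_{n,k}$ acts by $(k-j+1)(n-k-j)-(n-k)=(k-j)(n-k-j)-j=\theta_{n,k,j}$. Collecting the eigenvalues $\theta_{n,k,j}$ with multiplicities $f_{n,j}$ over $0\le j\le\min(k,n-k)$ then yields the asserted factorization of $\det(xI-A_{n,k})$.

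I expect the part requiring the most care to be the bookkeeping in the two combinatorial identities and, conceptually, the observation that semisimplicity of $\mathfrak{sl}_2$-modules delivers the multiplicities $f_{n,j}$ for free, so that no separate Sperner or normalized-matching argument for injectivity of $U_k$ is needed; the eigenvalue recursion is then a routine $\mathfrak{sl}_2$ computation. (One may reduce to $k\le n/2$ at the outset using the isomorphism $J(n,k)\cong J(n,n-k)$ via complementation, under which both $\theta_{n,k,j}$ and $f_{n,j}$ are invariant, but the argument above in fact applies uniformly.)
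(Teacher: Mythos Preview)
Your argument is correct and self-contained. The paper, however, does not prove this lemma at all: it is quoted verbatim as a classical result from \cite{BrouwerCohenNeumaier89} and then used as a black box in the proof of Theorem~\ref{thm:40}, so there is no in-paper proof to compare your approach against. For what it is worth, your realization of $A_{n,k}$ as $D_{k+1}U_k-(n-k)I$ inside an $\mathfrak{sl}_2$-module on the Boolean lattice is one of the standard routes to the Johnson spectrum; the identities $D_{k+1}U_k=A_{n,k}+(n-k)I$ and $U_{k-1}D_k=A_{n,k}+kI$, the multiplicity count $f_{n,j}=\dim M_j-\dim M_{j-1}$ via complete reducibility, and the recursion $DU^{m}v=m(n-2j-m+1)U^{m-1}v$ are all verified correctly, and your final simplification $(k-j+1)(n-k-j)-(n-k)=(k-j)(n-k-j)-j=\theta_{n,k,j}$ checks out.
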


\begin{theorem}\label{thm:40}
Let $(\mathfrak h_n,h)$ be the metric pair such that the basis of Example \ref{ex:5} is orthonormal with respect to $h$. Then
\begin{equation}
Z_{\mathfrak h_n,g}(s,t)=\sum_{k=0}^n\sum_{m=0}^{n-k}\sum_{j=0}^{\min(m,n-k-m)} N_{n,k,j}\quad s^{(m-j)(n-k-m-j+1)}t^{2m+k}\,,
\end{equation}
where $N_{n,k,0}=2^k\binom{n}{k}$ and
\begin{equation}
N_{n,k,j}=2^k\binom{n}{k\,,j\,,n-k-j}\frac{n-k-2j+1}{n-k-j+1}
\end{equation}
whenever $j\ge 1$.
\end{theorem}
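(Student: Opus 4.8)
The plan is to compute the spectrum of the basic Laplacian $\Delta_B$ for $(\mathfrak h_n, h)$ explicitly, by exploiting a symmetry that reduces the problem to the already-known spectra of Johnson graphs. From Remark \ref{rem:33}, with the Heisenberg structure constants $\gamma_{i,j}^1 = \delta_{j,i+n}$ (and the single central variable $z^*=z_1^*$), the formula \eqref{eq:17} gives
\begin{equation}
\Delta_B = \left(\sum_{p=1}^{n} y_p^* y_{p+n}^*\right)\left(\sum_{q=1}^{n} \frac{\partial}{\partial y_{q+n}^*}\frac{\partial}{\partial y_q^*}\right)\,.
\end{equation}
The key structural observation is that this operator respects a grading of $\mathcal C^\bullet_B(\mathfrak h_n)$ finer than the degree grading. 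Group the $2n$ variables into $n$ symplectic pairs $(y_p^*, y_{p+n}^*)$. Each monomial is classified by which pairs are fully occupied (both variables present, contributing the operator $\omega = \sum_p y_p^* y_{p+n}^*$ can raise), and which are singly occupied. Both $d$ and $d^*$ preserve the set of singly-occupied pairs and act only on the fully-occupied/empty pairs. Thus I would decompose $\mathcal C^\bullet_B$ into sectors labeled by (i) the set of singly-occupied pairs, say $k$ of them, and (ii) within each such sector, the action on the remaining $n-k$ pairs.

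Next I would analyze a single sector. Fixing $k$ singly-occupied pairs, there are $2^k$ ways to choose which element of each singled pair appears, and $\binom{n}{k}$ ways to choose the pairs themselves, giving the overall multiplicity factor $2^k\binom{n}{k}$ that appears in $N_{n,k,j}$. On the remaining $n-k$ pairs, the operator $\Delta_B$ restricted to monomials that use $m$ of them as fully-occupied acts exactly like $d d^*$ where $d$ maps a choice of $m-1$ occupied pairs (out of $n-k$) to a choice of $m$ occupied pairs. I would identify the nonzero eigenvalues of this restriction with the eigenvalues of the incidence-type operator on $m$-subsets of an $(n-k)$-set, and observe that $dd^*$ on this chain is conjugate (up to the known relation between up-down Laplacians and adjacency operators of Johnson graphs) to $A_{n-k,m}$ plus a scalar. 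This is where Lemma \ref{lem:38} enters: the eigenvalues $\theta_{n-k,m,j} = (m-j)(n-k-m-j)-j$ and multiplicities $f_{n-k,j}$ translate, after accounting for the shift $dd^* = A + (\text{degree term})$, into the exponent $(m-j)(n-k-m-j+1)$ of $s$ and the multiplicity formula for $N_{n,k,j}$. The exponent $t^{2m+k}$ simply records the total degree: $m$ fully-occupied pairs contribute $2m$ and $k$ singly-occupied pairs contribute $k$.

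The main obstacle is the precise bookkeeping in the middle step: verifying that the restriction of $dd^*$ to the $(n-k)$-pair sector really is (a shift of) the Johnson adjacency operator, and getting the eigenvalue shift exactly right so that $\theta_{n-k,m,j}$ becomes $(m-j)(n-k-m-j+1)$ rather than $(m-j)(n-k-m-j)$. The discrepancy of $+1$ in the second factor must come from the diagonal part of $dd^*$ (the term where the creation operator $\omega$ refills the pair just annihilated), and I would need to check that the up-down Laplacian on the Boolean-lattice chain of $m$-subsets equals $A_{n-k,m}$ shifted by the appropriate multiple of the identity on each eigenspace. Once the eigenvalue $\theta_{n-k,m,j}$ and multiplicity $f_{n-k,j}=\binom{n-k}{j}-\binom{n-k}{j-1}$ are pinned down, combining the per-sector multiplicity $f_{n-k,j}$ with the sector count $2^k\binom{n}{k}$ and simplifying the product of binomials into the trinomial form $\binom{n}{k,j,n-k-j}\frac{n-k-2j+1}{n-k-j+1}$ is a routine algebraic manipulation. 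I expect the summation ranges ($0\le k\le n$, $0\le m\le n-k$, $0\le j\le\min(m,n-k-m)$) to fall out directly from the constraints on occupied pairs and from the range of $j$ in Lemma \ref{lem:38}.
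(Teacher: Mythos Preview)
Your proposal is correct and follows essentially the same route as the paper: the decomposition into sectors indexed by the set of ``singly-occupied'' pairs is exactly the paper's decomposition $\mathcal C^\bullet_B(\mathfrak h_n)=\bigoplus_{m,\rho}\mathcal S(m)\rho$ with $\rho$ ranging over monomials in $\ker(\Delta_B)$, and both arguments reduce the spectral computation to Lemma~\ref{lem:38}. Your anticipated ``$+1$ shift'' is resolved in the paper by observing directly that the matrix of $\Delta_B$ on $\mathcal S(m)\rho$ is $mI+A_{n-\deg(\rho),m}$, so that the eigenvalues are $m+\theta_{n-k,m,j}=(m-j)(n-k-m-j+1)$, confirming your intuition that the diagonal term $\sum_i\omega_iD_i$ contributes the scalar $m$.
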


\begin{proof}
For every $i\in\{1,\ldots,n\}$ let $\omega_i=y_i^*y_{i+n}^*$ and $D_i=\frac{\partial}{\partial y_{i+n}^*}\frac{\partial}{\partial y_i^*}$. In this notation, \eqref{eq:17} specializes to $\Delta_B=\sum_{i=1}^n \omega_i D_i$. A monomial in $\mathcal C^\bullet(\mathfrak h_n)$ is in $\ker(\Delta_B)$ if and only if it is not divisible by any of the $\omega_i$. In particular, there are $\binom{n}{k}2^k$ such monomials of degree $k$. Let $\mathcal S=\mathbb R[\omega_1,\ldots,\omega_n]$ be the space of (commutative) polynomials in the quadratic monomials $\omega_i$. If $\mathcal S(m)$ is the subspace of $\mathcal S$ consisting of degree $m$ polynomials, then $\mathcal C^\bullet_B(\mathfrak h_n)$ decomposes as the direct sum of subspaces of the form $\mathcal S(m)\rho$ labeled by $m\in\{0,\ldots,n\}$ and by monomials $\rho\in \ker(\Delta_B)$. By inspection, $\Delta_B$ preserves each $\mathcal S(m)\rho$ and its matrix representative with respect to any monomials basis is $mI+A_{n-\deg(\rho),m}$, where $I$ is the identity matrix of size $\binom{n-\deg(\rho)}{m}$ and $A_{n-\deg(\rho),m}$ is the adjacency matrix of the Johnson graph $J(n-\deg(\rho),m)$. The result then follows from Lemma \ref{lem:38} and straightforward calculations with binomial coefficients.

\begin{example}
In the notation of Example \ref{ex:35}, it follows from Theorem \ref{thm:40} that
\begin{equation}
    Z_{\mathfrak h_2,h}(s,t)=1 + 4t + 5t^2 + s^2 t^2 + 4st^3 + s^2 t^4\,.
\end{equation}
On the other hand,
\begin{equation}
Z_{\mathfrak h_2,h'}(s,t)=Z_{\mathfrak g,g}(s,t)=1 + 4t + 5t^2 + s^3 t^2 + 2s^{(3-\sqrt{5})/2}t^3 + 2s^{(3+\sqrt{5})/2}t^3+s^3t^4\,,
\end{equation}
showing at once that the basic partition function is not necessarily a polynomial and that, in general, different choices of inner product lead to different basic partition functions. 
\end{example}

\end{proof}

\subsection*{Acknowledgements} The work on this paper was supported by the National Science Foundation grant number DMS1950015. We are very grateful to Glenn Hurlbert and Craig Larson for illuminating correspondence. High performance computing resources provided by the High Performance Research Computing (HPRC) Core Facility at
Virginia Commonwealth University were used for conducting the research reported in this work.

\begin{bibdiv} 
\begin{biblist}

\bib{AldiBevins24}{article}{
   author={Aldi, Marco},
   author={Bevins, Samuel},
   title={2-step nilpotent $L_\infty$-algebras and hypergraphs},
   journal={J. Pure Appl. Algebra},
   volume={228},
   date={2024},
   number={6},
   pages={Paper No. 107593, 12}
}

\bib{ABGGLP}{article}{
   author={Aldi, Marco},
   author={Butler, Andrew},
   author={Gardiner, Jordan},
   author={Grandini, Daniele},
   author={Lichtenwalner, Monica},
   author={Pan, Kevin},
   title={On the cohomology of Lie algebras associated with graphs},
   journal={J. Pure Appl. Algebra},
   volume={229},
   date={2025},
   number={1},
   pages={Paper No. 107838, 15}
}

\bib{AGGHK}{article}{
author={Aldi, Marco},
author={Gabrielsen, Thor},
author={Grandini, Daniele},
author={Harris, Joy},
author={Kelley, Kyle},
title={An efficiently computable lower bound for the independence number of hypergraphs},
eprint={arXiv:2502.11814}
}

\bib{Arocha84}{article}{
   author={Arocha, Jorge Luis},
   title={Properties of the independence polynomial of a graph},
   language={Spanish, with English summary},
   journal={Cienc. Mat. (Havana)},
   volume={5},
   date={1984},
   number={3},
   pages={103--110}
}

\bib{Berge76}{book}{
   author={Berge, Claude},
   title={Graphs and hypergraphs},
   series={North-Holland Mathematical Library, Vol. 6},
   edition={Second revised edition},
   note={Translated from the French by Edward Minieka},
   publisher={North-Holland Publishing Co., Amsterdam-London; American
   Elsevier Publishing Co., Inc., New York},
   date={1976}
}

\bib{BrouwerCohenNeumaier89}{book}{
   author={Brouwer, A. E.},
   author={Cohen, A. M.},
   author={Neumaier, A.},
   title={Distance-regular graphs},
   series={Ergebnisse der Mathematik und ihrer Grenzgebiete (3) [Results in
   Mathematics and Related Areas (3)]},
   volume={18},
   publisher={Springer-Verlag, Berlin},
   date={1989}
}

\bib{CaroTuza91}{article}{
   author={Caro, Yair},
   author={Tuza, Zsolt},
   title={Improved lower bounds on $k$-independence},
   journal={J. Graph Theory},
   volume={15},
   date={1991},
   number={1},
   pages={99--107},
}

\bib{CsabaPlickShokoufandeh12}{article}{
   author={Csaba, B\'{e}la},
   author={Plick, Thomas A.},
   author={Shokoufandeh, Ali},
   title={A note on the Caro-Tuza bound on the independence number of
   uniform hypergraphs},
   journal={Australas. J. Combin.},
   volume={52},
   date={2012},
   pages={235--242},
}

\bib{DaniMainkar05}{article}{
   author={Dani, S. G.},
   author={Mainkar, Meera G.},
   title={Anosov automorphisms on compact nilmanifolds associated with
   graphs},
   journal={Trans. Amer. Math. Soc.},
   volume={357},
   date={2005},
   number={6},
   pages={2235--2251},
}

\bib{Eustis13}{book}{
   author={Eustis, Alexander},
   title={Hypergraph Independence Numbers},
   note={Thesis (Ph.D.)--University of California, San Diego},
   publisher={ProQuest LLC, Ann Arbor, MI},
   date={2013},
   pages={123},
}

\bib{Hansen79}{article}{
   author={Hansen, Pierre},
   title={Upper bounds for the stability number of a graph},
   journal={Rev. Roumaine Math. Pures Appl.},
   volume={24},
   date={1979},
   number={8},
   pages={1195--1199}
}

\bib{Hori03}{book}{
   author={Hori, Kentaro},
   author={Katz, Sheldon},
   author={Klemm, Albrecht},
   author={Pandharipande, Rahul},
   author={Thomas, Richard},
   author={Vafa, Cumrun},
   author={Vakil, Ravi},
   author={Zaslow, Eric},
   title={Mirror symmetry},
   series={Clay Mathematics Monographs},
   volume={1},
   note={With a preface by Vafa},
   publisher={American Mathematical Society, Providence, RI; Clay
   Mathematics Institute, Cambridge, MA},
   date={2003}
}

\bib{Karp72}{article}{
   author={Karp, Richard M.},
   title={Reducibility among combinatorial problems},
   conference={
      title={Complexity of computer computations},
      address={Proc. Sympos., IBM Thomas J. Watson Res. Center, Yorktown
      Heights, N.Y.},
      date={1972},
   },
   book={
      series={The IBM Research Symposia Series},
      publisher={Plenum, New York-London},
   },
   date={1972},
   pages={85--103},
   review={\MR{378476}},
}

\bib{Kostant61}{article}{
   author={Kostant, Bertram},
   title={Lie algebra cohomology and the generalized Borel-Weil theorem},
   journal={Ann. of Math. (2)},
   volume={74},
   date={1961},
   pages={329--387},
}

\bib{LevitMandrescu05}{article}{
   author={Levit, Vadim E.},
   author={Mandrescu, Eugen},
   title={The independence polynomial of a graph---a survey},
   conference={
      title={Proceedings of the 1st International Conference on Algebraic
      Informatics},
   },
   book={
      publisher={Aristotle Univ. Thessaloniki, Thessaloniki},
   },
   date={2005},
   pages={233--254},
}

\bib{Mainkar15}{article}{
   author={Mainkar, Meera G.},
   title={Graphs and two-step nilpotent Lie algebras},
   journal={Groups Geom. Dyn.},
   volume={9},
   date={2015},
   number={1},
   pages={55--65},
}

\bib{Nomizu54}{article}{
   author={Nomizu, Katsumi},
   title={On the cohomology of compact homogeneous spaces of nilpotent Lie
   groups},
   journal={Ann. of Math. (2)},
   volume={59},
   date={1954},
   pages={531--538}
}

\bib{PouseeleTirao09}{article}{
   author={Pouseele, Hannes},
   author={Tirao, Paulo},
   title={Compact symplectic nilmanifolds associated with graphs},
   journal={J. Pure Appl. Algebra},
   volume={213},
   date={2009},
   number={9},
   pages={1788--1794}
}

\bib{Santharoubane83}{article}{
   author={Santharoubane, L. J.},
   title={Cohomology of Heisenberg Lie algebras},
   journal={Proc. Amer. Math. Soc.},
   volume={87},
   date={1983},
   number={1},
   pages={23--28}
}

\bib{Sigg96}{article}{
   author={Sigg, Stefan},
   title={Laplacian and homology of free two-step nilpotent Lie algebras},
   journal={J. Algebra},
   volume={185},
   date={1996},
   number={1},
   pages={144--161},
   issn={0021-8693},
}

\bib{Trinks12}{book}{
   author={Trinks, Martin},
   title={Graph Polynomials and Their Representations},
   note={Thesis (Dr. rer. nat.).},
   publisher={Technische Universit\"at Bergakademie Freiberg},
   date={2012},
   pages={112+x},
}

\bib{Turan41}{article}{
   author={Tur\'{a}n, Paul},
   title={Eine Extremalaufgabe aus der Graphentheorie},
   language={Hungarian, with German summary},
   journal={Mat. Fiz. Lapok},
   volume={48},
   date={1941},
   pages={436--452}
}

\end{biblist}
\end{bibdiv}

\vskip.1in\noindent
\address{Marco Aldi\\
Department of Mathematics and Applied Mathematics\\
Virginia Commonwealth University\\
Richmond, VA 23284, USA\\
\email{maldi2@vcu.edu}}

\vskip.1in\noindent
\address{Thor Gabrielsen\\
Department of Mathematics\\
Colby College\\
Waterville, ME 04901\\
\email{trgabr26@colby.edu}}

\vskip.1in\noindent
\address{Daniele Grandini\\
Department of Mathematics and Economics\\
Virginia State University\\
Petersburg, VA 23806, USA\\
\email{dgrandini@vsu.edu}}

\vskip.1in\noindent
\address{Joy Harris\\
Department of Mathematics\\
University of Georgia\\
Athens, GA 30602, USA\\
\email{jharris80412@gmail.com}}

\vskip.1in\noindent
\address{Kyle Kelley\\
Department of Mathematics and Statistics\\
Kenyon College\\
Gambier, OH 43022\\
\email{KyleAKelley@pm.me}}

\end{document}